\newtheorem{theorem}{Theorem}[section]
\newtheorem{definition}{Definition}[section]
\newtheorem{corollary}{Corollary}[section]
\newcommand{\noi}{\noindent}
\newcommand{\mZ}{{\mathbb Z}}
\title{\textbf{Distance Antimagic Labeling of Zero-Divisor Graphs}}
\author{\textbf{V. Sivakumaran$^{1,} $\thanks{Corresponding author: anukumar675@gmail.com}, K. Sankar$^2$, S. Prabhu$^{1} $}\\
$^1$\small Department of Mathematics, Rajalakshmi Engineering College, Thandalam, Chennai 602105, India\\
$^2$\small Department of Mathematics, Anna University, Chennai 600025, India}
\date{}
\begin{document}
\maketitle
\begin{abstract}
In this paper, we prove that for all $m\geq 1$ and $n=1$, the graph $ m\Gamma(\mZ_9)+n\Gamma(\mZ_4)$, for all $n\geq 1$, and $m=1$, the graph $ m\overline{\Gamma(\mZ_6)}+n\Gamma(\mZ_9)$, for all $m\geq1$, $[m\Gamma(\mZ_9)+\Gamma(\mZ_4)]\times \Gamma(\mZ_9)$, for all prime $m\geq3$, $\Gamma(\mZ_6)\times\Gamma(\mZ_{2m})$ and $\Gamma(\mZ_6)\times\Gamma(\mZ_{m^2})$ are all admit distance antimagic labeling.

\noi\textbf{Keywords:} Distance antimagic labeling, Zero-divisor
graph, Join of graphs, Cartesian product of graphs.

\noi\textbf{Mathematics Subject Classification:} 05C78.
\end{abstract}
\section{Introduction}
In the year 1990, Beck \cite{be88} introduced Zero-divisor graph of a commutative ring. Anderson and Livingston studied and developed the relation between theoretical properties of the ring and graph theoretical properties of zero-divisor graph $\Gamma(R)$ in \cite{al99}. 

Consider a commutative ring $R$ with unity and assume set of all zero-divisor of $R$ be $\mZ(R)$ and $\mZ^*(R)=\mZ(R)\setminus \{0\}$ be the vertex set such that any two distinct vertices $u,v \in \mZ^*(R)$ are said to be adjacent if and only if $uv=0$. This condition provides zero-divisor graph and is denoted by $\Gamma(R)$. A graph $\Gamma(\mZ_{10})$ with $V[\Gamma(\mZ_{10})] = \{2, 4, 6, 8, 5\}$ and edge set
$E[\Gamma(\mZ_{10})] = \{(2, 5), (4, 5), (6, 5), (8, 5)\}$ is given in
Figure \ref{fig1}. A graph $\Gamma(\mZ_{15})$ with $V[\Gamma(\mZ_{15})] = \{3, 6, 9, 12, 5, 10\}$ and edge set
$E[\Gamma(\mZ_{15})] = \{(3, 5)$, $(6, 5)$, $(9, 5)$, $(12, 5)$, $(3,
10)$, $(6, 10)$, $(9, 10)$, $(12, 10)\}$ is given in Figure
\ref{fig2}.

\begin{figure}[h!]
  \centerline{\includegraphics[scale=1]{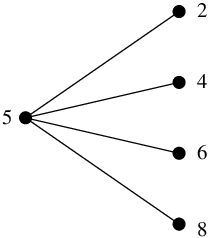}}
  \caption{$\Gamma(\mZ_{10})$}
  \label{fig1}
\end{figure}

\begin{figure}[h!]
  \centerline{\includegraphics{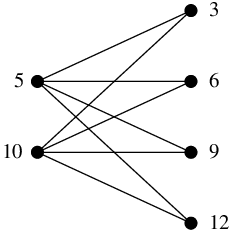}}
  \caption{$\Gamma(\mZ_{15})$}
  \label{fig2}
\end{figure}

\begin{definition}{\rm \cite{we01}}
Let $H_1, H_2, \dots, H_m$ be the given $m$ copies of disjoint graphs. Then a set of $k$ disjoint graphs.  Then $H_1+H_2+ \dots +H_m$ is obtained from $H_1, H_2, \dots, H_m$ by joining every vertex of $H_i$ with every vertex $H_j$, whenever $i\neq j$. 
{\it join graph} $G_1 + G_2 + \ldots + G_k$ is obtained from $G_1,
G_2, \dots, G_k$ by joining every vertex of $G_i$ with every vertex of
$G_j$, whenever $i \neq j$.
\end{definition}

\begin{definition}{\rm \cite{we01}}
The {\it Cartesian product} of the graphs $G$ and $H$, denoted $G
\times H$, is the graph with $V(G \times H) = \{(u, v) : u
\in V(G) \text{ and } v \in V(H)\}$ and $E(G \times H) =
\{<(u, v), (u', v')> : u = u' \text{ and } vv' \in E(H) \text{ or } v
= v' \text{ and } uu' \in E(G)\}$
\end{definition}
If a graph $G$ is said to follow a DAML if for any two distinct vertices $u_1, u_2 \in V(G)$ such that $N(u_1)\neq N(u_2)$. 
\section{Main Results}
In the main result, we show that some join of zero-divisor this paper, we prove that some join of zero-divisor graphs and some
Cartesian product of zero-divisor graphs are distance antimagic
graphs.

\begin{theorem}\label{thm2.1}
The join graph $\Gamma(\mZ_{2m}) + \Gamma(\mZ_4)$ does not have DAML
for all prime $m \geq 3$.
\end{theorem}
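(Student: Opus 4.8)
The plan is to reduce the statement to a structural obstruction coming from \emph{twin vertices} (distinct vertices sharing the same open neighborhood), since such vertices necessarily receive equal weights under any labeling and so block any distance antimagic labeling. First I would pin down the two factors. In $\mZ_4$ the only nonzero zero-divisor is $2$ (as $2\cdot 2=0$), so $\Gamma(\mZ_4)$ is a single isolated vertex $K_1$; call its vertex $w$. For $\Gamma(\mZ_{2m})$ with $m$ an odd prime, the nonzero zero-divisors split into the $m-1$ even residues $2,4,\dots,2m-2$ together with the residue $m$. Since $m\cdot 2k=2mk\equiv 0\pmod{2m}$, the vertex $m$ is adjacent to every even residue. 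Two even residues $2a,2b$ are adjacent iff $2m\mid 4ab$, i.e.\ $m\mid ab$; as $m$ is prime and $1\le a,b\le m-1$, this never occurs. Hence $\Gamma(\mZ_{2m})=K_{1,m-1}$, the star with center $m$ and leaves $2,4,\dots,2m-2$.

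Next I would read off the neighborhoods in the join $G=\Gamma(\mZ_{2m})+\Gamma(\mZ_4)=K_{1,m-1}+K_1$. Writing $c$ for the center $m$ and $\ell_1,\dots,\ell_{m-1}$ for the leaves, the join adds all edges from $w$ to every star-vertex. A direct check gives $N(c)=\{\ell_1,\dots,\ell_{m-1},w\}$, $N(w)=\{c,\ell_1,\dots,\ell_{m-1}\}$, and crucially $N(\ell_i)=\{c,w\}$ for \emph{every} $i$. Thus all $m-1$ leaves are mutually twin.

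The conclusion is then immediate. For any bijection $f\colon V(G)\to\{1,\dots,m+1\}$, the weight of a leaf is $\sum_{x\in N(\ell_i)}f(x)=f(c)+f(w)$, independent of $i$. Since $m\ge 3$ forces $m-1\ge 2$, there are at least two distinct leaves with equal weight, so the weights cannot be pairwise distinct and no distance antimagic labeling exists. Hence $\Gamma(\mZ_{2m})+\Gamma(\mZ_4)$ admits no DAML for every prime $m\ge 3$.

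I expect the only nonroutine step to be the verification that the even residues form an independent set in $\Gamma(\mZ_{2m})$; this is exactly where the primality of $m$ enters, through the implication $m\mid ab\Rightarrow m\mid a \text{ or } m\mid b$. Once the star structure is secured, the twin-leaf obstruction is forced and the non-existence follows with no case analysis.
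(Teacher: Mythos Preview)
Your proof is correct and follows essentially the same approach as the paper: both establish the existence of distinct vertices $u,v$ with $N(u)=N(v)$ in the join and invoke the necessary condition that twin vertices preclude a distance antimagic labeling. Your version is considerably more explicit---you actually identify $\Gamma(\mZ_4)\cong K_1$, prove $\Gamma(\mZ_{2m})\cong K_{1,m-1}$ using primality of $m$, and name the leaves as the twin vertices---whereas the paper simply asserts the existence of such $u,v$ by reference to a figure.
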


\begin{proof}
Consider the graphs $\Gamma(\mZ_{2m})$ and
$\Gamma(\mZ_4)$ for all prime numbers $m \geq 3$.\\
Let $G = \Gamma(\mZ_{2m}) + \Gamma(\mZ_4)$ be the join of graphs as
shown in Figure \ref{fig4}.\\

There exist  $u, v \in V(G)$ with $N(u) = N(v)$.  By the
necessary condition for distance antimagic graphs, the graph
$\Gamma(\mZ_{2m}) + \Gamma(\mZ_4)$ does not have DAML for all prime $m
\geq 3$.

\begin{figure}[h!]
  \centerline{\includegraphics{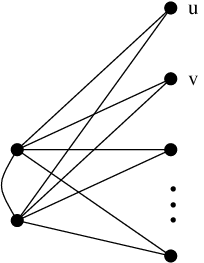}}
  \caption{The graph $\Gamma(\mZ_{2m}) + \Gamma(\mZ_4)$}
  \label{fig4}
\end{figure}

\end{proof}

We state the following few theorems without proof.  The proof of those
theorems can be done as the proof of Theorem \ref{thm2.1}.
\begin{theorem}
	For every prime $m\geq 3$, $\Gamma(\mZ_{2m})+G$, $G\in \{\Gamma(\mZ_6),\Gamma(\mZ_9), \overline{\Gamma(\mZ_6)},  \overline{\Gamma(\mZ_9)}\}$ does not admit DAML.
\end{theorem}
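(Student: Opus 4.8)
The plan is to reduce all four cases to the same twin-vertex obstruction used in Theorem~\ref{thm2.1}, so that the specific choice of $G$ never actually enters. Recall that the necessary condition for a DAML is that no two distinct vertices share the same open neighborhood: if $N(u)=N(v)$ with $u\neq v$, then $w(u)=\sum_{x\in N(u)}f(x)=\sum_{x\in N(v)}f(x)=w(v)$ for every bijective labeling $f$, so the weights cannot be distinct. Thus it suffices to exhibit, in each join $\Gamma(\mZ_{2m})+G$, two distinct vertices with equal neighborhoods.

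First I would pin down the structure of $\Gamma(\mZ_{2m})$ for prime $m\geq 3$. The nonzero zero-divisors are exactly the even residues $2,4,\dots,2m-2$ together with the single odd residue $m$. Since $m$ is an odd prime, for any two even residues $2a,2b$ (with $1\le a,b\le m-1$) we have $2a\cdot 2b\equiv 0\pmod{2m}$ iff $m\mid ab$, which fails because $a,b<m$ and $m$ is prime; hence no two even residues are adjacent. On the other hand $m\cdot 2a\equiv 0\pmod{2m}$ for every $a$, so $m$ is adjacent to all even residues. Therefore $\Gamma(\mZ_{2m})\cong K_{1,m-1}$, a star with center $m$ and the $m-1$ leaves $2,4,\dots,2m-2$; note that $m-1\ge 2$.

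With this in hand the conclusion is immediate for any $G$. In the join $\Gamma(\mZ_{2m})+G$, every leaf $\ell$ of the star is adjacent within $\Gamma(\mZ_{2m})$ only to the center $m$, and the join makes it adjacent to all of $V(G)$; hence $N(\ell)=\{m\}\cup V(G)$, a set that does not depend on which leaf we chose. Since there are $m-1\ge 2$ leaves, any two of them, say $2$ and $4$, satisfy $N(2)=N(4)=\{m\}\cup V(G)$. By the necessary condition the graph admits no DAML, and this holds verbatim for each $G\in\{\Gamma(\mZ_6),\Gamma(\mZ_9),\overline{\Gamma(\mZ_6)},\overline{\Gamma(\mZ_9)}\}$, since the argument never used the internal structure of $G$.

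The only genuine content, and hence the step I would check carefully, is the star identification of $\Gamma(\mZ_{2m})$: this is exactly where primality of $m$ is used, ensuring the even residues form an independent set and that $m$ is the unique common neighbor of the leaves. For composite $2m$ the zero-divisor graph would be richer and the leaves could split into several neighborhood classes, so the clean twin structure would need re-examination. Everything after that is the same routine appeal to the necessary condition as in Theorem~\ref{thm2.1}.
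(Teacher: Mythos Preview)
Your argument is correct and matches the paper's approach exactly: the paper explicitly states this theorem without proof and directs the reader to imitate Theorem~\ref{thm2.1}, i.e., to exhibit two vertices with identical neighborhoods, which is precisely what you do via the star structure of $\Gamma(\mZ_{2m})$. If anything, you supply more detail than the paper by justifying $\Gamma(\mZ_{2m})\cong K_{1,m-1}$ and noting that the particular $G$ is irrelevant.
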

\begin{theorem}
	For every prime $m\geq 5$, $\Gamma(\mZ_{3m})+G$, $G\in \{\Gamma(\mZ_4), \Gamma(\mZ_6), \Gamma(\mZ_9),  \overline{\Gamma(\mZ_6)}, \overline{\Gamma(\mZ_9)}\}$ does not admit DAML.
\end{theorem}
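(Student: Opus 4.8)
The plan is to mirror the proof of Theorem~\ref{thm2.1}: exhibit inside $\Gamma(\mZ_{3m})$ a pair of distinct vertices with identical open neighbourhoods, and then check that the join with $G$ preserves this coincidence regardless of the choice of $G$. The decisive point is that the necessary condition invoked in Theorem~\ref{thm2.1} (two vertices sharing a neighbourhood are forced to share a weight, so a distance antimagic labelling cannot exist) depends only on the neighbourhood structure; the labelling is therefore blocked before any weights are computed.

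First I would determine the structure of $\Gamma(\mZ_{3m})$ for a prime $m \geq 5$. Since $3m$ is then a product of two distinct primes, every nonzero zero-divisor is a multiple of $3$ or a multiple of $m$. The multiples of $3$ form the set $A = \{3, 6, \ldots, 3(m-1)\}$ of size $m-1$, and the multiples of $m$ form the set $B = \{m, 2m\}$. A short computation using $\gcd(3,m)=1$ shows that no two elements of $A$ multiply to $0$ modulo $3m$, likewise for $B$, while every product of an element of $A$ with an element of $B$ vanishes; hence $\Gamma(\mZ_{3m}) \cong K_{m-1,\,2}$, exactly as the worked example $\Gamma(\mZ_{15})$ in the introduction illustrates.

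The key step is the twin observation: the two vertices $m$ and $2m$ satisfy $N_{\Gamma(\mZ_{3m})}(m) = N_{\Gamma(\mZ_{3m})}(2m) = A$. Now set $G' = \Gamma(\mZ_{3m}) + G$. For any vertex $u$ of $\Gamma(\mZ_{3m})$ the join gives $N_{G'}(u) = N_{\Gamma(\mZ_{3m})}(u) \cup V(G)$, with $V(G)$ disjoint from $V(\Gamma(\mZ_{3m}))$. Consequently $N_{G'}(m) = A \cup V(G) = N_{G'}(2m)$, so $m$ and $2m$ remain a pair of twins in $G'$. By the necessary condition for distance antimagic graphs, $G'$ admits no DAML.

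Finally I would emphasise that this argument never refers to the internal structure of $G$: the obstruction lives entirely inside $\Gamma(\mZ_{3m})$, and the join merely appends the common set $V(G)$ to both neighbourhoods. Hence the conclusion holds uniformly for each $G \in \{\Gamma(\mZ_4), \Gamma(\mZ_6), \Gamma(\mZ_9), \overline{\Gamma(\mZ_6)}, \overline{\Gamma(\mZ_9)}\}$. There is no deep obstacle here; the only genuine content is the bookkeeping that identifies $\Gamma(\mZ_{3m})$ as $K_{m-1,2}$ and thereby locates the twin pair $\{m, 2m\}$. Once that identification is in place the theorem follows at once, precisely as in Theorem~\ref{thm2.1}.
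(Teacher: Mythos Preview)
Your proposal is correct and follows exactly the approach the paper intends: the paper explicitly states this theorem without proof, noting that it ``can be done as the proof of Theorem~\ref{thm2.1},'' namely by exhibiting two vertices with identical open neighbourhoods and invoking the necessary condition. Your identification of $\Gamma(\mZ_{3m})\cong K_{m-1,2}$ and the twin pair $\{m,2m\}$ makes this explicit, and the observation that the join simply appends $V(G)$ to both neighbourhoods is precisely what the paper's template argument requires.
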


\begin{theorem}
	For all $m\geq 1$ and $n=1$ if and only if the graph $m\Gamma(\mZ_9)+n\Gamma(\mZ_4)$ admits DAML.
\end{theorem}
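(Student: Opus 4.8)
The plan is to read off the two building blocks explicitly and then treat the biconditional as two separate tasks. First I would record that $\Gamma(\mZ_9)$ has vertex set $\{3,6\}$ with the single edge $3\cdot 6\equiv 0\pmod 9$, so $\Gamma(\mZ_9)\cong K_2$, while $\Gamma(\mZ_4)$ has the lone vertex $2$ and no edge, so $\Gamma(\mZ_4)\cong K_1$. Consequently $m\Gamma(\mZ_9)$ is a disjoint union of $m$ edges (a perfect matching on $2m$ vertices), $n\Gamma(\mZ_4)$ is a set of $n$ mutually non-adjacent vertices, and in the join $G=m\Gamma(\mZ_9)+n\Gamma(\mZ_4)$ each matching vertex is adjacent to its partner and to all $n$ apex vertices, whereas the $n$ apex vertices stay pairwise non-adjacent. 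I will write the matching edges as $\{x_i,y_i\}$ for $i=1,\dots,m$ and the apex vertices as $z_1,\dots,z_n$.

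For the necessity direction (DAML $\Rightarrow n=1$), I would invoke the same necessary condition used in the proof of Theorem \ref{thm2.1}: no two distinct vertices may share an open neighbourhood, since equal neighbourhoods force equal weights under every labeling. If $n\geq 2$, then two apex vertices $z_1,z_2$ are non-adjacent and each is joined to exactly the $2m$ matching vertices, so $N(z_1)=N(z_2)=\{x_1,y_1,\dots,x_m,y_m\}$. This violates the necessary condition, so $G$ admits no DAML when $n\geq 2$; hence a DAML forces $n=1$, with $m\geq 1$ simply ensuring $G$ is non-degenerate.

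For sufficiency ($m\geq 1,\ n=1\Rightarrow$ DAML) there is a single apex $z$ and $p=2m+1$ vertices. I would try the almost-forced assignment $f(z)=1$ and let the values $\{2,\dots,2m+1\}$ fill the matching vertices in any order. The weights are then $w(x_i)=f(y_i)+1$ and $w(y_i)=f(x_i)+1$, which range over $\{3,4,\dots,2m+2\}$ and so are pairwise distinct (using only that $f$ is a bijection), together with $w(z)=\tfrac{(2m+1)(2m+2)}{2}-1=2m^2+3m$. The last check is that $w(z)$ avoids the matching block, i.e. $2m^2+3m>2m+2$, equivalently $2m^2+m-2>0$, which holds for every $m\geq 1$ (at $m=1$ it reads $1>0$). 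Thus $w(z)$ is strictly the largest weight and the labeling is distance antimagic.

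The bulk of the work is routine once the two gadget graphs are identified; the only genuine decision is where to send the apex label. Placing it at the extreme value $1$ (equivalently $2m+1$) is precisely what pushes $w(z)$ clear of the tight block $\{3,\dots,2m+2\}$, so I expect the main—though mild—obstacle to be confirming this separation inequality uniformly in $m$ and noting that the $2m$ matching weights cannot collide among themselves. If a cleaner statement is desired, one can observe that \emph{any} injective labeling with $f(z)\in\{1,\,2m+1\}$ succeeds, yielding a whole family of distance antimagic labelings rather than a single one.
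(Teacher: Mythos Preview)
Your proposal is correct and follows essentially the same approach as the paper: for $n\ge 2$ you invoke the equal-neighbourhood obstruction among the $K_1$-apex vertices, and for $n=1$ you exhibit an explicit bijective labeling and verify that the apex weight sits strictly above the block of matching weights. The only cosmetic difference is that the paper assigns the extreme label $2m+1$ to the apex (so the matching weights become $\{2m+2,\dots,4m+1\}$ via an odd/even split and the apex weight is $m(2m+1)$), whereas you put the apex at label $1$; your observation that either extreme works subsumes the paper's choice, and your ``$f(\text{partner})+1$'' argument avoids the parity case analysis.
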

\begin{proof}
Consider the zero-divisor graphs $\Gamma(\mZ_9)$ and $\Gamma(\mZ_4)$.\\
Let $G = m \Gamma(\mZ_9) + n \Gamma(\mZ_4)$ with $|V(G)| = 2m+n$ for
all $m \geq 1$, and $n \geq 1$.

\noi\textbf{Case (i):} For $n = 1$ and $m \geq 1$.

\noi Consider the graph $m \Gamma(\mZ_9) + \Gamma(\mZ_4)$ with $|V(m
\Gamma(\mZ_9) + \Gamma(\mZ_4))| = 2m+1$.\\
We label the vertices of $m \Gamma(\mZ_9) + \Gamma(\mZ_4)$ as shown in
Figure \ref{fig5}. 

\begin{figure}[h!]
  \centerline{\includegraphics{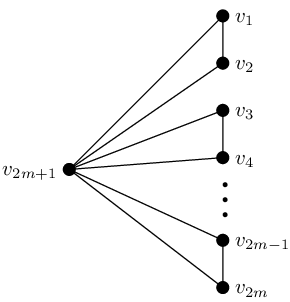}}
  \caption{The graph $m \Gamma(\mZ_9) + \Gamma(\mZ_4)$}
  \label{fig5}
\end{figure}

Define $f : V(m \Gamma(\mZ_9) + \Gamma(\mZ_4)) \to \{1, 2, \dots,
2m+1\}$ by\\ $f(v_i) = i$, whenever $i \in \{1,2,\ldots, 2m+1\}$ .

The weight of each vertex is determined by:\\
$w(v_i) = \begin{cases} 2m+2+i, & \text{if } i \text{ is odd} \\ 2m+i, 
  & \text{if } i \text{ is even} \end{cases}$\\  
and $w(v_{2m+1}) = m(2m+1)$\\
It follows that $w(v_i) \neq w(v_j)$ for all $i \neq j$.

Hence, the graph $m \Gamma(\mZ_9) + \Gamma(\mZ_4)$ admits DAML for all $m \geq 1$.

\noi\textbf{Case (ii):} For $n > 1$ and $m \geq 1$.

\noi Consider the graph $m \Gamma(\mZ_9) + n \Gamma(\mZ_4)$ with $|V(m
\Gamma(\mZ_9) + n \Gamma(\mZ_4))| = 2m+n$.  We label the vertices of
$m \Gamma(\mZ_9) + n \Gamma(\mZ_4)$ as shown in Figure \ref{fig6}.

\begin{figure}[h!]
  \centerline{\includegraphics{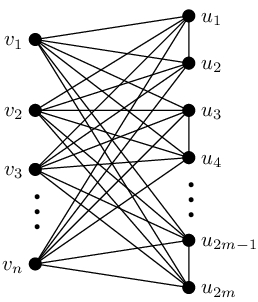}}
  \caption{$m \Gamma(\mZ_9) + n \Gamma(\mZ_4)$}
  \label{fig6}
\end{figure}

Clearly, we have $N(v_1) = N(v_2) = \dots = N(v_m)$.\\
Hence, the graph $m \Gamma(\mZ_9) + n \Gamma(\mZ_4)$ does not have
DAML for all $m \geq 1$ and $n > 1$.

Therefore, the graph $m \Gamma(\mZ_9) + n \Gamma(\mZ_4)$ admits DAML if
and only if for $m \geq 1$ and $n = 1$.
\end{proof}

\begin{theorem}
For all $m\geq1$, and $n\geq 1$, the graph  $m \Gamma(\mZ_6) + n \Gamma(\mZ_4)$ does not admit DAML. 
\end{theorem}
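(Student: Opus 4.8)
The plan is to exhibit, for every $m\geq1$ and $n\geq1$, a pair of distinct vertices with identical open neighborhoods, thereby violating the same necessary condition for distance antimagic labeling that was used in the proof of Theorem~\ref{thm2.1}. First I would record the underlying graph structures. The zero-divisor graph $\Gamma(\mZ_6)$ has vertex set $\{2,3,4\}$ with edges $(2,3)$ and $(3,4)$ (since $2\cdot 4=8\not\equiv 0$), so it is a path on three vertices: a central vertex of degree two flanked by two end-vertices of degree one. By contrast $\Gamma(\mZ_4)$ has the single zero-divisor $2$, so it is a one-vertex graph. Consequently $m\Gamma(\mZ_6)$ is a disjoint union of $m$ three-vertex paths, $n\Gamma(\mZ_4)$ is a set of $n$ isolated vertices, and the join adjoins every vertex of the paths to every one of these $n$ isolated vertices.

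Next I would fix notation, writing the $i$-th copy of $\Gamma(\mZ_6)$ as $a_i-b_i-c_i$ with center $b_i$ and end-vertices $a_i,c_i$, and writing the $n$ vertices coming from $n\Gamma(\mZ_4)$ as $w_1,\dots,w_n$. The crux is then the neighborhood computation for the two end-vertices of a single path. Within $m\Gamma(\mZ_6)$ the only neighbor of $a_i$ is $b_i$, and the only neighbor of $c_i$ is also $b_i$, because distinct copies are non-adjacent in the disjoint union. The join attaches the full set $w_1,\dots,w_n$ to both $a_i$ and $c_i$, and the $w_j$ are mutually non-adjacent, so no further edges enter the picture. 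Hence
\[
N(a_i)=\{b_i,w_1,\dots,w_n\}=N(c_i),
\qquad\text{while } a_i\neq c_i.
\]

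Finally, since any labeling $f$ produces the weight $w(u)=\sum_{x\in N(u)}f(x)$, the equality $N(a_i)=N(c_i)$ forces $w(a_i)=w(c_i)$ under every bijective labeling, so the vertex weights can never all be distinct. By the necessary condition invoked in Theorem~\ref{thm2.1}, the graph $m\Gamma(\mZ_6)+n\Gamma(\mZ_4)$ therefore does not admit DAML for all $m\geq1$ and $n\geq1$. I anticipate no real obstacle: the argument is a direct twin-vertex observation requiring no case split, and the only point meriting care is confirming that the two end-vertices of each $P_3$ genuinely inherit identical neighborhoods after the join. They do, precisely because $n\geq1$ guarantees they are attached to the same nonempty set $\{w_1,\dots,w_n\}$, and they were already attached to the same center $b_i$. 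This also highlights the structural contrast with the earlier $m\Gamma(\mZ_9)+n\Gamma(\mZ_4)$ result: there each copy is a $K_2$ whose two vertices have distinct neighborhoods, whereas here the $P_3$ endpoints are unavoidably twins.
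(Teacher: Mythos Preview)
Your proposal is correct and follows essentially the same approach as the paper: both arguments exhibit the two end-vertices of a single $P_3$ copy of $\Gamma(\mZ_6)$ as twin vertices with identical open neighborhoods in the join, invoking the necessary condition from Theorem~\ref{thm2.1}. Your version is simply more explicit about the underlying structures of $\Gamma(\mZ_6)$ and $\Gamma(\mZ_4)$ and spells out the neighborhood computation, whereas the paper relies on a figure and states the equalities $N(u_{3i-2})=N(u_{3i})$ directly.
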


\begin{proof}
Consider the zero-divisor graphs $\Gamma(\mZ_6)$ and
$\Gamma(\mZ_4)$.\\ 
Let $G = m \Gamma(\mZ_6) + n \Gamma(\mZ_4)$ with $|V(G)| = 3m+n$, for $m \geq 1$ and $n \geq 1$.\\
We label the vertices of $G$ as in Figure \ref{fig7}.

\begin{figure}[h!]
  \centerline{\includegraphics{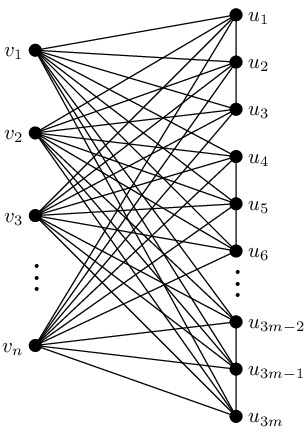}}
  \caption{$m \Gamma(\mZ_6) + n \Gamma(\mZ_4)$}
  \label{fig7}
\end{figure}

\noi We observe that $N(u_1) = N(u_3)$, $N(u_4) = N(u_6),\ldots, N(u_{3m-2}) = N(u_{3m})$.\\
Hence, the graph $m \Gamma(\mZ_6) + n \Gamma(\mZ_4)$ does not
admit DAML for all $m \geq 1$ and $n \geq 1$.
\end{proof}

\begin{theorem}
For all $m \geq 1$ and $n \geq 1$,  $m \Gamma(\mZ_6) + n \Gamma(\mZ_9)$ does not admit DAML.
\end{theorem}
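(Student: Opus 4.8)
The plan is to reuse the repeated-neighborhood obstruction that already disposed of the preceding theorem on $m\Gamma(\mZ_6)+n\Gamma(\mZ_4)$, since replacing the block $n\Gamma(\mZ_4)$ by $n\Gamma(\mZ_9)$ changes nothing essential in the argument. First I would identify the two factors concretely. The nonzero zero-divisors of $\mZ_6$ are $\{2,3,4\}$ with $2\cdot3=0$ and $3\cdot4=0$ but $2\cdot4=2\neq0$, so $\Gamma(\mZ_6)$ is the path $2-3-4$, that is $P_3$ with center $3$; and the nonzero zero-divisors of $\mZ_9$ are $\{3,6\}$ with $3\cdot6=0$, so $\Gamma(\mZ_9)$ is a single edge $K_2$. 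Hence $G=m\Gamma(\mZ_6)+n\Gamma(\mZ_9)$ is the join of the block $A=mP_3$ with the block $B=nK_2$, every vertex of $A$ being adjacent to every vertex of $B$.

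Next I would invoke the necessary condition already used in the paper: for any labeling $f$ the weight of a vertex is $w(u)=\sum_{x\in N(u)}f(x)$, so two distinct vertices $u,v$ with $N(u)=N(v)$ always satisfy $w(u)=w(v)$, and then $G$ cannot admit a DAML. It therefore suffices to exhibit a single pair of distinct vertices with equal open neighborhoods.

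Such a pair comes from the two endpoints of any one copy of $P_3$ inside $A$. Writing the $k$-th copy as $u_{3k-2}-u_{3k-1}-u_{3k}$ with center $u_{3k-1}$, the endpoints $u_{3k-2}$ and $u_{3k}$ are mutually non-adjacent (precisely because $2\cdot4\neq0$ in $\mZ_6$), and inside $A$ each is adjacent only to $u_{3k-1}$; the join then makes both of them adjacent to all of $V(B)$ and to nothing else. Consequently $N(u_{3k-2})=\{u_{3k-1}\}\cup V(B)=N(u_{3k})$. Since $m\geq1$, at least one such copy is present, the required pair exists, the necessary condition is violated, and the theorem follows for all $m\geq1$ and $n\geq1$.

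Because this is essentially the argument of the previous theorem, I expect no serious obstacle; the one point meriting care is checking that the two path-endpoints receive exactly the same set of join-neighbors. This rests on the endpoints being non-adjacent within $\Gamma(\mZ_6)$ and on both lying in the same $P_3$, so that whichever convention one adopts for the join --- the block join of $A$ and $B$ used above, or a pairwise join of all $m+n$ copies --- the added neighbors of $u_{3k-2}$ and $u_{3k}$ coincide and the conclusion is unaffected.
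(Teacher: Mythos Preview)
Your proposal is correct and follows essentially the same approach as the paper: you exhibit the two endpoints of a copy of $\Gamma(\mZ_6)\cong P_3$ as vertices with identical open neighborhoods, which is exactly the observation $N(u_{3k-2})=N(u_{3k})$ that the paper invokes. Your write-up is in fact more explicit than the paper's, supplying the identifications $\Gamma(\mZ_6)\cong P_3$ and $\Gamma(\mZ_9)\cong K_2$ and carefully noting that the argument is insensitive to whether the join is taken blockwise or across all $m+n$ copies.
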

\begin{figure}[h!]
	\centerline{\includegraphics{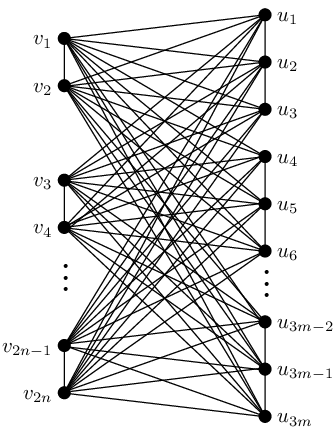}}
	\caption{ $m \Gamma(\mZ_6) + n \Gamma(\mZ_9)$}
	\label{fig8}
\end{figure}

\begin{proof}
Consider the zero-divisor graphs $\Gamma(\mZ_6)$ and
$\Gamma(\mZ_9)$. Let $G = m \Gamma(\mZ_6) + n \Gamma(\mZ_9)$ with $|V(G)| = 3m+2n$, for
 $m,n \geq 1$. The vertices of $G$  labeled as in Figure \ref{fig8}. We have  $N(u_1) = N(u_3)$, $\dots$, $N(u_{3m-2}) =
N(u_{3m})$. Hence, the graph $m \Gamma(\mZ_6) + n \Gamma(\mZ_9)$ does not admit DAML for all $m \geq 1$ and $n \geq 1$.
\end{proof}

\begin{corollary}
For all  $m \geq 1$, $m \Gamma(\mZ_6) + \overline{\Gamma(\mZ_9)}$ does not admit  DAML.
\end{corollary}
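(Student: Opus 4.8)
The plan is to observe that this corollary is an immediate specialization of the preceding theorem's argument, since that argument never used any property of the second join factor beyond the fact that every one of its vertices becomes a common neighbour of all vertices of $m\Gamma(\mZ_6)$. First I would recall the relevant structures: each copy of $\Gamma(\mZ_6)$ is the path $u_{3i-2} - u_{3i-1} - u_{3i}$ on three vertices (the zero-divisors $2,3,4$ of $\mZ_6$, with $3$ the central vertex), while $\overline{\Gamma(\mZ_9)}$ is the complement of the single edge $\Gamma(\mZ_9)=K_2$, i.e.\ two non-adjacent vertices. Within the $i$-th copy of $\Gamma(\mZ_6)$ the two end vertices $u_{3i-2}$ and $u_{3i}$ are twins: each is adjacent only to the centre $u_{3i-1}$, so $N(u_{3i-2}) = \{u_{3i-1}\} = N(u_{3i})$ inside $m\Gamma(\mZ_6)$.

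Next I would pass to the join $m\Gamma(\mZ_6) + \overline{\Gamma(\mZ_9)}$. By the definition of the join, every vertex of $\overline{\Gamma(\mZ_9)}$ is joined to every vertex of $m\Gamma(\mZ_6)$; in particular both $u_{3i-2}$ and $u_{3i}$ acquire exactly the same two new neighbours, namely the whole vertex set of $\overline{\Gamma(\mZ_9)}$. Hence in the join we still have $N(u_{3i-2}) = \{u_{3i-1}\} \cup V(\overline{\Gamma(\mZ_9)}) = N(u_{3i})$ for every $i$, exactly the twin relations $N(u_1)=N(u_3), \dots, N(u_{3m-2})=N(u_{3m})$ exhibited in the proof of the previous theorem. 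Since these twin pairs necessarily receive equal vertex weights under any bijective labelling, the necessary condition for a distance antimagic labelling fails, and the graph admits no DAML for every $m \geq 1$.

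I do not expect a genuine obstacle here; the only point that needs checking is that replacing $n\Gamma(\mZ_9)$ by its complement $\overline{\Gamma(\mZ_9)}$ cannot disturb the twin structure responsible for the failure. This is immediate: complementing the $\Gamma(\mZ_9)$ factor only alters adjacencies internal to that factor, leaving both the edges of each $\Gamma(\mZ_6)$ copy and all join edges untouched, so the equalities $N(u_{3i-2})=N(u_{3i})$ survive verbatim. The argument is therefore structurally identical to that of the preceding theorem, which is exactly why the statement is phrased as a corollary.
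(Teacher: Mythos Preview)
Your argument is correct, but it is not the one the paper gives. You locate the twin pair inside $m\Gamma(\mZ_6)$, reusing verbatim the observation $N(u_{3i-2})=N(u_{3i})$ from the preceding theorem and noting that the join only appends the common set $V(\overline{\Gamma(\mZ_9)})$ to both neighbourhoods. The paper instead locates the twin pair inside $\overline{\Gamma(\mZ_9)}$: since $\overline{\Gamma(\mZ_9)}=\overline{K_2}$ consists of two non-adjacent vertices, each of them is joined in $G$ to all $3m$ vertices of $m\Gamma(\mZ_6)$ and to nothing else, so these two vertices have identical neighbourhoods of size $3m$.

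Both routes are one-line twin arguments. Your approach has the advantage of making the ``corollary'' status transparent---the proof of the preceding theorem carries over word for word regardless of what graph replaces $n\Gamma(\mZ_9)$. The paper's approach is specific to the complement $\overline{\Gamma(\mZ_9)}$ (it would fail for $\Gamma(\mZ_9)$ itself, where the two vertices are adjacent and hence not twins), but it exhibits a second, independent obstruction and works even for a single copy without needing to inspect the structure of $\Gamma(\mZ_6)$.
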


\begin{proof}
Let $G = m \Gamma(\mZ_6) + \overline{\Gamma(\mZ_9)}$ be a join graph and $|V(G)| = 3m+2$. There exist $u,v \in \overline{\Gamma(\mZ_9)}(G)$, we have $N(u) = N(v) = 3m$.
Hence, the graph $m \Gamma(\mZ_6) + \overline{\Gamma(\mZ_9)}$ does not admit distance antimagic labeling.
\end{proof}

\begin{theorem}
The join graph $m \overline{\Gamma(\mZ_6)} + n \Gamma(\mZ_9)$ admits DAML
if and only if $m = 1$ and $n \geq 1$.
\end{theorem}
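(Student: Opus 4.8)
The plan is to treat the two directions separately after first pinning down the constituent graphs. I would record that $\Gamma(\mZ_9)\cong K_2$ (the edge on the zero-divisors $3,6$) and that $\overline{\Gamma(\mZ_6)}$ is the complement of the path $\Gamma(\mZ_6)$ on $2,3,4$, hence consists of one edge together with one isolated vertex. Thus $G=m\overline{\Gamma(\mZ_6)}+n\Gamma(\mZ_9)$ has $3m+2n$ vertices: the first factor contributes $m$ isolated vertices $c_1,\dots,c_m$ together with $m$ disjoint edges, the second factor contributes $n$ disjoint edges, and in the join every first-factor vertex is adjacent to every second-factor vertex.

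For the \emph{only if} direction I would suppose $m\geq 2$. Each isolated vertex $c_i$ has no neighbor inside the first factor (the $m$ copies are vertex-disjoint), so after the join its open neighborhood is exactly the set of all $2n$ second-factor vertices, independently of $i$. Hence $N(c_1)=N(c_2)$, and by the necessary condition for distance antimagic graphs used already in Theorem \ref{thm2.1} (equal open neighborhoods force equal weights), $G$ admits no DAML. This forces $m=1$.

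For the \emph{if} direction I would exhibit a labeling of $G=\overline{\Gamma(\mZ_6)}+n\Gamma(\mZ_9)$. Write $a,b$ for the edge and $c$ for the isolated vertex of the first factor, and $\{x_j,y_j\}$ for the $j$-th edge of the second factor; set $A=f(a)+f(b)+f(c)$ and $T=\sum_j\bigl(f(x_j)+f(y_j)\bigr)$, so that $A+T=\binom{2n+4}{2}$. A direct computation gives $w(c)=T$, $w(a)=T+f(b)$, $w(b)=T+f(a)$, while each second-factor weight has the form $A+f(\cdot)$ (the partner label plus $A$). Since $f$ is injective, the three first-factor weights are automatically distinct and the $2n$ second-factor weights are automatically distinct; the only thing left to arrange is that no first-factor weight equals a second-factor weight.

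The main obstacle is exactly this cross-condition, and the naive uniform choice breaks down in the smallest case, so I would split on $n$. For $n\geq 2$, assign the three smallest labels $\{1,2,3\}$ to $a,b,c$ and the remaining labels to the second factor; then every first-factor weight is at least $T=\tfrac{(2n+3)(2n+4)}{2}-6$, whereas every second-factor weight is at most $A+(2n+3)=2n+9$, and the inequality $T>2n+9$ (equivalently $2n^2+5n-9>0$) holds for all $n\geq 2$, placing the two families of weights in disjoint ranges. For the exceptional case $n=1$ I would verify a single explicit labeling, for instance $f(c)=3,\ f(a)=4,\ f(b)=5,\ f(x_1)=1,\ f(y_1)=2$, whose weights $3,7,8,13,14$ are pairwise distinct. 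Combining the two directions yields the stated equivalence.
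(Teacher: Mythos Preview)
Your argument is correct and follows the same two-part structure as the paper: the \emph{only if} direction via the equal-neighborhood obstruction (the isolated vertices $c_1,\dots,c_m$ all see exactly the $2n$ second-factor vertices, which is precisely the paper's observation $N(u_3)=N(u_6)=\cdots=N(u_{3m})$), and the \emph{if} direction via an explicit labeling.

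The only substantive difference is in the labeling you choose. The paper assigns the \emph{large} labels $N+1,N+2,N+3$ (with $N=2n$) to the $\overline{\Gamma(\mZ_6)}$-block and $1,\dots,N$ to the $\Gamma(\mZ_9)$-blocks, obtaining closed formulas for all weights without a case split. You do the reverse for $n\ge 2$, putting $\{1,2,3\}$ on the first factor, which forces a separate verification at $n=1$. What your choice buys is a clean range-separation argument ($T>2n+9$) that is easy to check. In fact your version is the more careful one: the paper's uniform labeling actually fails for $n=3$ and $n=4$. For instance, at $n=3$ (so $N=6$) the paper's formulas give $w(v_3)=3N+3+7=28$ and $w(v_8)=(N^2+3N+2)/2=28$, and likewise $w(v_6)=w(v_7)=29$, so the weights are not all distinct. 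Your labeling avoids this because the first-factor weights (at least $T$) strictly exceed every second-factor weight (at most $2n+9$) once $n\ge 2$.
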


\begin{proof}
Consider the zero-divisor graphs $\Gamma(\mZ_6)$ and
$\Gamma(\mZ_9)$.\\ 
Let $G = m \overline{\Gamma(\mZ_6)} + n \Gamma(\mZ_9)$ with $|V(G)| = 3m+2n$ for all $m \geq 1$ and $n \geq 1$.

For $m > 1$ and $n \geq 1$, Consider the graph $m \overline{\Gamma(\mZ_6)} + n \Gamma(\mZ_9)$ with
$|V(m \overline{\Gamma(\mZ_6)} + n \Gamma(\mZ_9))| = 3m+2n$.
We label the vertices of the graph $m \overline{\Gamma(\mZ_6)} + n\Gamma(\mZ_9)$ as in Figure \ref{fig9}.

\begin{figure}[h!]
  \centerline{\includegraphics{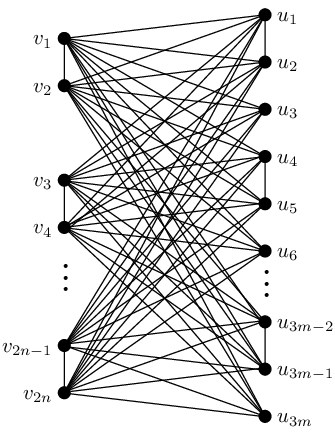}}
  \caption{The graph $m \overline{\Gamma(\mZ_6)} + n \Gamma(\mZ_9)$}
  \label{fig9}
\end{figure}

It is observed that $N(u_3) = N(u_6) = \dots = N(u_{3m})$.
Hence, the graph $m \overline{\Gamma(\mZ_6)} + n \Gamma(\mZ_9)$
does not admit DAML, for all $m > 1$ \& $n \geq 1$.

For $m = 1$ and $n \geq 1$, consider the graph $\overline{\Gamma(\mZ_6)} + n \Gamma(\mZ_9)$ with $|V(\overline{\Gamma(\mZ_6)} + n \Gamma(\mZ_9))| = 2n+3$.  
Let $N = 2n$ and the vertices of $G$ is labeled as shown in Figure
\ref{fig10}.

\begin{figure}[h!]
  \centerline{\includegraphics{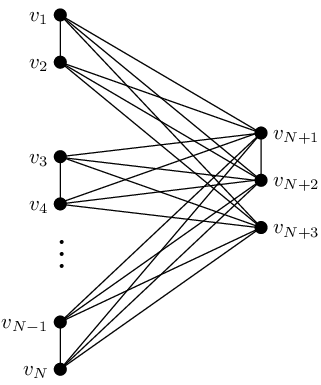}}
  \caption{The graph $\overline{\Gamma(\mZ_6)} + n \Gamma(\mZ_9)$}
  \label{fig10}
\end{figure}
Let $f : V(G) \to \{1, 2, \dots, N+3\}$ be a labeling function and is defined by $f(v_i) = i$, if $1 \leq i \leq N+3$. The weight of each vertex is determined by \\
$w(v_i) = \begin{cases} 3N+i+7, & \text{if } i \text{  odd and } 1
  \leq i \leq N-1 \\ 3N+i+5, & \text{if } i \text{  even and } 2
  \leq i \leq N \end{cases}$\\
$w(v_{N+1}) = (N^2+3N+4)/2$,\\
$w(v_{N+2}) = (N^2+3N+2)/2$ and\\
$w(v_{N+3}) = (N^2+N)/2$.\\
It follows that $w(v_i) \neq w(v_j)$ for all $i \neq j$.\\
Then the graph $\overline{\Gamma(\mZ_6)} + n \Gamma(\mZ_9)$ admits DAML
for all $n \geq 1$.\\
Hence, the graph $m \overline{\Gamma(\mZ_6)} + n \Gamma(\mZ_9)$
admits DAML if and only if $m = 1$ and $n \geq 1$.
\end{proof}

\begin{theorem}
The graph $[m \Gamma(\mZ_9) + \Gamma(\mZ_4)] \times \Gamma(\mZ_9)$ has
distance antimagic labeling for all $m \geq 1$.
\end{theorem}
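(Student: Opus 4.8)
The plan is to first unwind the structure of the inner graph. Since $\Gamma(\mZ_9)\cong K_2$ and $\Gamma(\mZ_4)\cong K_1$, the graph $H:=m\Gamma(\mZ_9)+\Gamma(\mZ_4)$ is a perfect matching $v_1v_2,v_3v_4,\dots,v_{2m-1}v_{2m}$ together with a single apex vertex $v_{2m+1}$ adjacent to all of $v_1,\dots,v_{2m}$; this is precisely the graph whose DAML was already constructed in the case $n=1$ above. Taking the Cartesian product with $\Gamma(\mZ_9)\cong K_2$ then produces the prism $G=H\times K_2$: two disjoint copies $H_a,H_b$ of $H$ (layers $a$ and $b$) joined by a perfect matching $(u,a)(u,b)$ across the layers, so that $|V(G)|=4m+2$, every matching vertex has degree $3$, and each apex has degree $2m+1$. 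My approach is the same as in the earlier theorems: construct one explicit bijection $f:V(G)\to\{1,\dots,4m+2\}$ and verify that the $4m+2$ vertex weights are pairwise distinct.

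Writing $A(u)$ and $B(u)$ for the labels placed on $(u,a)$ and $(u,b)$, the weight of a layer-$a$ matching vertex is $w(v_j,a)=A(\sigma(j))+A(v_{2m+1})+B(v_j)$, where $\sigma$ is the matching involution $\sigma(2i-1)=2i$, $\sigma(2i)=2i-1$; the layer-$b$ matching weights are the mirror image, and the two apex weights are the full layer sums $\sum_j A(v_j)+B(v_{2m+1})$ and $\sum_j B(v_j)+A(v_{2m+1})$. The key device I would use is to break the layer symmetry by a twist: set $A(v_j)=j$ and $B(v_j)=2m+\sigma(j)$ for $1\le j\le 2m$, and award the two apices the largest labels $A(v_{2m+1})=4m+1$ and $B(v_{2m+1})=4m+2$. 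This is a bijection (layer-$a$ matching takes $\{1,\dots,2m\}$, layer-$b$ matching takes $\{2m+1,\dots,4m\}$, apices take $4m+1,4m+2$), and a direct computation collapses the matching weights to $w(v_j,a)=2\sigma(j)+6m+1$ and $w(v_j,b)=2j+6m+2$. Thus all layer-$a$ matching weights are odd and all layer-$b$ matching weights are even, which separates the two layers automatically and makes each of the two lists injective.

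The step I expect to be the real obstacle is controlling the two apex weights, which equal $2m^2+5m+2$ and $6m^2+5m+1$ and so grow quadratically, against the matching weights, which are only of order $10m$. For $m\ge 3$ an elementary estimate ($2m^2+5m+2>10m+2$, together with $6m^2+5m+1-(2m^2+5m+2)=4m^2-1>0$) shows that both apex weights overshoot every matching weight and differ from one another, so the labeling above is distance antimagic. The difficulty is concentrated in the small cases $m=1$ and $m=2$, where the apex degree is comparable to $3$ and the apex weights fall inside the matching range; for $m=1$ the graph degenerates to the triangular prism $K_3\times K_2$ in which every vertex has degree $3$, so no growth argument can separate apices from matching vertices.

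For these two residual cases I would simply exhibit explicit bijections and check distinctness by hand. For $m=1$, placing $1,2,3$ on one triangle and $4,6,5$ on the other (so the prism edges join $1\!-\!4$, $2\!-\!6$, $3\!-\!5$) yields the six distinct weights $\{8,9,10,11,12,13\}$. For $m=2$, one can take layer-$a$ labels $3,4,5,6$ on the matching vertices and $9$ on the apex, and layer-$b$ labels $1,7,2,8$ on the matching vertices and $10$ on the apex; here the matching weights are $\{14,15,17,18,19,20,22,23\}$ while the two apex weights are $27$ and $28$, all ten distinct. Combining the general labeling for $m\ge 3$ with these two explicit verifications establishes that $[m\Gamma(\mZ_9)+\Gamma(\mZ_4)]\times\Gamma(\mZ_9)$ admits a distance antimagic labeling for every $m\ge 1$.
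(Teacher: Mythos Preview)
Your proof is correct and follows essentially the same strategy as the paper: identify $H=m\Gamma(\mZ_9)+\Gamma(\mZ_4)$ as an $m$-matching with an apex, recognize the product with $\Gamma(\mZ_9)\cong K_2$ as the prism $H\times K_2$, write down an explicit bijection $V(G)\to\{1,\dots,4m+2\}$, and verify that all $4m+2$ weights are distinct. The paper also treats $m=1$ separately (its $m=1$ labeling is in fact equivalent to yours, yielding the same $(8,1)$-distance antimagic weight set $\{8,\dots,13\}$), and then gives a single closed-form labeling for all $m\ge 2$.

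The only substantive difference is in the packaging of the general case. Your parity/twist device ($A(v_j)=j$, $B(v_j)=2m+\sigma(j)$) gives a transparent reason why the matching weights are distinct (odd versus even layers), but at the cost of an extra exceptional case at $m=2$, where the smaller apex weight $2m^2+5m+2$ falls inside the matching range. The paper's labeling (which places the two apices at $4m+1$, $4m+2$ as you do, but orders the four rows of matching vertices differently) produces apex weights $4m^2+5m+2$ and $4m^2+5m+1$, both of which already exceed the matching range for every $m\ge 2$, so only $m=1$ needs separate treatment. Either way the argument is a direct construction plus a finite check, and your version is entirely sound.
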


\begin{proof}
Let $G = [m \Gamma(\mZ_9) + \Gamma(\mZ_4)] \times \Gamma(\mZ_9)$ be a
cartesian product graph with $|V(G)| = 4m+2$ for $m \geq 1$.
We prove this theorem in two cases:

\noi{\bf Case 1.} Let $m = 1$, then the graph $[\Gamma(\mZ_9)
+ \Gamma(\mZ_4)] \times \Gamma(\mZ_9)$ be a Cartesian product graph of
order 6. We label the vertices of $[\Gamma(\mZ_9)
+ \Gamma(\mZ_4)] \times \Gamma(\mZ_9)$ as given in
Figure \ref{fig6.10}.

\begin{figure}[h!]
  \centerline{\includegraphics{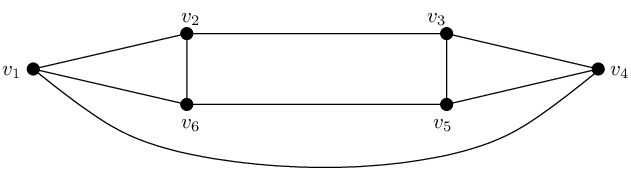}}
  \caption{The Cartesian product graph $[\Gamma(\mZ_9)
  + \Gamma(\mZ_4)] \times \Gamma(\mZ_9)$}
  \label{fig6.10}
\end{figure}
Let $f : V([\Gamma(\mZ_9) + \Gamma(\mZ_4)] \times \Gamma(\mZ_9)) \to \{1,
2, 3, 4, 5, 6\}$ be a labeling function and defined by\\
$f(v_1) = 1$, $f(v_2) = 2$, $f(v_3) = 3$, $f(v_4) = 4$, $f(v_5) = 5$,
$f(v_6) = 6$\\
The weight of each vertex is determined by\\
$w(v_1) = 12$, $w(v_2) = 10$, $w(v_3) = 11$, $w(v_4) = 9$, $w(v_5) =
13$, $w(v_6) = 8$

It follows that the graph $[\Gamma(\mZ_9)
+ \Gamma(\mZ_4)] \times \Gamma(\mZ_9)$ has distance antimagic
labeling.  Also we observe that the graph $[\Gamma(\mZ_9)
+ \Gamma(\mZ_4)] \times \Gamma(\mZ_9)$ is a (8,1)-distance antimagic
graph.

\noi{\bf Case 2.} Let $m \geq 2$, then the graph $[m\Gamma(\mZ_9)
+ \Gamma(\mZ_4)] \times \Gamma(\mZ_9)$ be a Cartesian product graph of
order $4m+2$.\\
We label the vertices of $[m\Gamma(\mZ_9)
+ \Gamma(\mZ_4)] \times \Gamma(\mZ_9)$ as depicted in
Figure \ref{fig6.11}.

\begin{figure}[h!]
  \centerline{\includegraphics[scale=0.9]{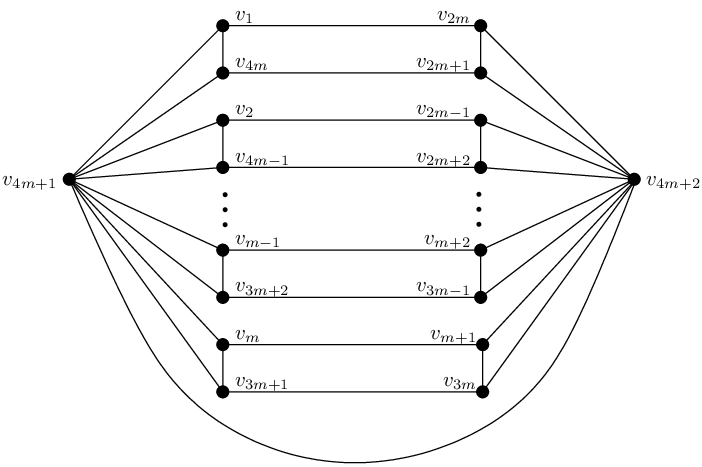}}
  \caption{The Cartesian product graph $[m \Gamma(\mZ_9)
  + \Gamma(\mZ_4)] \times \Gamma(\mZ_9)$}
  \label{fig6.11}
\end{figure}
Let
$f : V([m\Gamma(\mZ_9) + \Gamma(\mZ_4)] \times \Gamma(\mZ_9)) \to \{1,
2, \dots, 4m+2\}$ be a labeling function and defined by\\
$f(v_i) = i$, if $1 \leq i \leq 4m+2$.\\
The weight of each vertex is determined by\\
$w(v_i) = 10m+3-2i$, if $1 \leq i \leq m$\\
$w(v_i) = 10m+4-2i$, if $m+1 \leq i \leq 2m$\\
$w(v_i) = 14m+4-2i$, if $2m+1 \leq i \leq 3m$\\
$w(v_i) = 14m+3-2i$, if $3m+1 \leq i \leq 4m$\\
$w(v_{4m+1}) = 4m^2+5m+2$\\
$w(v_{4m+2}) = 4m^2+5m+1$, for $m \geq 2$\\
Observe that $w(v_i) \neq w(v_j)$, for all $i \neq j$.\\
Therefore, the graph $[m \Gamma(\mZ_9) + \Gamma(\mZ_4)] \times
\Gamma(\mZ_9)$ has distance antimagic labeling for all $m \geq 2$.
Hence, from Case 1 and 2, the graph $[m \Gamma(\mZ_9)
+ \Gamma(\mZ_4)] \times \Gamma(\mZ_9)$ has distance antimagic labeling
for all $m \geq 1$.
\end{proof}

\begin{theorem}
The graph $\Gamma(\mZ_6) \times \Gamma(\mZ_{2m})$ admits DAML for all
prime $m \neq 2$.
\end{theorem}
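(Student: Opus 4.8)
The plan is to first recognise both factors explicitly as stars. For an odd prime $m$ the nonzero zero-divisors of $\mZ_{2m}$ are the $m-1$ even residues $2,4,\dots,2m-2$ together with $m$; since $m$ is odd no two even residues multiply to $0$, whereas $m$ annihilates every even residue, so $\Gamma(\mZ_{2m})\cong K_{1,m-1}$ is a star with centre $m$. Likewise $\Gamma(\mZ_6)$ is the path $2-3-4$, i.e. $K_{1,2}$ with centre $3$. Hence $G=\Gamma(\mZ_6)\times\Gamma(\mZ_{2m})$ is the Cartesian product $K_{1,2}\times K_{1,m-1}$ on $3m$ vertices, which I would picture as a $3\times m$ grid whose rows are copies of $K_{1,m-1}$ and whose columns are copies of $K_{1,2}$. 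Writing the $\Gamma(\mZ_6)$-centre as $x_0$ (leaves $x_1,x_2$) and the $\Gamma(\mZ_{2m})$-centre as $y_0$ (leaves $y_1,\dots,y_{m-1}$), the vertices split into four types: the centre-centre $(x_0,y_0)$ of degree $m+1$; the $m-1$ centre-leaves $(x_0,y_j)$ of degree $3$; the two leaf-centres $(x_i,y_0)$ of degree $m$; and the $2(m-1)$ leaf-leaves $(x_i,y_j)$ of degree $2$. A short check shows all open neighbourhoods are distinct, so the necessary condition for DAML holds.

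Next I would write down an explicit bijection $f:V(G)\to\{1,\dots,3m\}$ and compute the weights type by type. I would give the centre-leaves the smallest labels $f(x_0,y_j)=j$, then fill the first leaf-row by $f(x_1,y_0)=m$ and $f(x_1,y_j)=m+j$, and the second leaf-row by $f(x_2,y_0)=2m$, $f(x_0,y_0)=2m+1$, $f(x_2,y_j)=2m+1+j$. Separating the two ``spine'' labels $f(x_1,y_0)=m$ and $f(x_2,y_0)=2m$ by a gap of size $m$ is exactly what keeps the leaf-leaf weights distinct. This yields $w(x_1,y_j)=m+j$ and $w(x_2,y_j)=2m+j$ (two disjoint blocks $\{m+1,\dots,2m-1\}$ and $\{2m+1,\dots,3m-1\}$); $w(x_0,y_j)=5m+2+2j$ (an arithmetic progression of common difference $2$ lying in $[5m+4,7m]$); and the three high-degree weights $w(x_0,y_0)=(m^2+5m)/2$, $w(x_1,y_0)=(3m^2+m+2)/2$, $w(x_2,y_0)=(5m^2+m)/2$, whose consecutive differences $(m-1)^2$ and $m^2-1$ are positive, so they are mutually distinct.

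Finally I would verify global distinctness across the four families. The two leaf-leaf blocks and the centre-leaf progression are internally injective by construction and occupy the disjoint ranges $[m+1,3m-1]$ and $[5m+4,7m]$, while the three quadratic weights dominate everything once $m$ is large. The step I expect to be the main obstacle is the mid-range overlap: the centre-centre weight $(m^2+5m)/2$ grows like $m^2/2$, so instead of sitting cleanly above the centre-leaf band $[5m+4,7m]$ it can fall inside it. Solving $5m+4\le (m^2+5m)/2\le 7m$ shows this happens only for $m\in\{7,8,9\}$, and the sole prime there is $m=7$, where $(m^2+5m)/2=42$ is even whereas every centre-leaf weight $5m+2+2j$ is odd, so no collision occurs. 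For $m\ge 11$ one has $(m^2+5m)/2>7m$ and the bands separate outright, and the remaining small primes $m=3,5$ are checked directly (the centre-centre weight lands in the gap between the two bands). Primality is used precisely to discard the genuinely bad grid sizes such as $m=9$, and for every admissible prime the weights are pairwise distinct, giving the desired distance antimagic labeling.
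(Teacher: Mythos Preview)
Your argument is correct. Both you and the paper proceed by recognising $\Gamma(\mZ_6)\times\Gamma(\mZ_{2m})\cong K_{1,2}\times K_{1,m-1}$ and then exhibiting an explicit bijection to $\{1,\dots,3m\}$, but the two labelings differ. The paper treats $m=3$ separately and for $m>3$ uses the plain row-by-row assignment $f(v_i^{(j)})=(j-1)m+i$, recording the resulting weight formulas (e.g.\ $w(v_i^{(1)})=3m-i+2$, $w(v_1^{(1)})=(m^2+7m-2)/2$, etc.) and asserting distinctness without a detailed case analysis. You instead engineer the labeling so that the leaf--leaf weights fall into two clean blocks $\{m+1,\dots,2m-1\}$ and $\{2m+1,\dots,3m-1\}$, which lets you carry out the distinctness check uniformly for all odd primes; the price is the small overlap analysis at $m=7$ (handled by parity) and the direct checks at $m=3,5$. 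One minor remark: your closing sentence slightly overstates the role of primality in the labeling step, since your construction actually works for every odd $m\ge3$ except $m=9$; primality is genuinely needed earlier, to guarantee that $\Gamma(\mZ_{2m})$ is the star $K_{1,m-1}$.
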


\begin{proof}
Consider the zero-divisor graphs $\Gamma(\mZ_6)$ and
$\Gamma(\mZ_{2m})$.\\
Let $G = \Gamma(\mZ_6) \times \Gamma(\mZ_{2m})$ with $|V(G)| = 3m$,
for all prime $m \neq 2$.

\noi{\bf Case (i):} For $m = 3$, $|V(\Gamma(\mZ_6) \times \Gamma(\mZ_6))| =9$.

We label the vertices of $\Gamma(\mZ_6) \times \Gamma(\mZ_6)$ as shown
in Figure \ref{fig12}.

\begin{figure}[h!]
  \centerline{\includegraphics{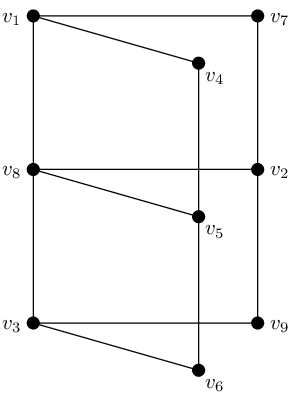}}
  \caption{The graph $\Gamma(\mZ_6) \times \Gamma(\mZ_6)$}
  \label{fig12}
\end{figure}

Let
$f : V(G) \to \{1, 2, \dots, 9\}$ be a labeling function and is defined by \\
$f(v_1) = 1$, $f(v_2) = 2$, $f(v_3) = 3$, $f(v_4) = 4$, $f(v_5) =
5$,\\
$f(v_6) = 6$, $f(v_7) = 7$, $f(v_8) = 8$, $f(v_9) = 9$.\\
The weight of each vertex is determined by:\\
$w(v_1) = 19$, $w(v_2) = 24$, $w(v_3) = 23$,\\
$w(v_4) = 6$, $w(v_5) = 18$, $w(v_6) = 8$,\\
$w(v_7) = 3$, $w(v_8) = 11$, $w(v_9) = 5$.\\
It follows that the graph $\Gamma(\mZ_6) \times \Gamma(\mZ_6)$ admits DAML.

\noi{\bf Case (ii):} Let $m > 3$ and prime.  

\noi We assume the graph $\Gamma(\mZ_6) \times \Gamma(\mZ_{2m})$ with
$|V(\Gamma(\mZ_6) \times \Gamma(\mZ_{2m}))| = 3m$.\\
We label the vertices of $\Gamma(\mZ_6) \times \Gamma(\mZ_{2m})$ as
shown in Figure \ref{fig13}.

\begin{figure}[h!]
  \centerline{\includegraphics{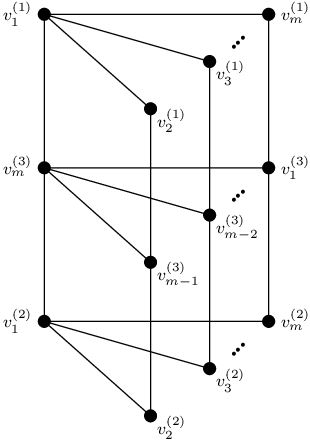}}
  \caption{The graph $\Gamma(\mZ_6) \times \Gamma(\mZ_{2m})$}
  \label{fig13}
\end{figure}

$f : V(G) \to \{1, 2, \dots, 3m\}$ be a labeling function defined by\\
$f(v_i^{(j)}) = \begin{cases} i, & \text{where }  \ j = 1  \ \text{and}\ 1 \leq i \leq m\\ 
                            m+i, & \text{where}  \  j = 2 \ \text{and} \ 1 \leq i \leq m\\    
                            2m+i, & \text{where} \ j = 3 \ \text{and}\ 1 \leq i \leq m \end{cases}$\\
The weight of each vertex is given by:\\
$w(v_1^{(1)}) = (m^2+7m-2)/2$,\\
$w(v_i^{(1)}) = 3m-i+2$, if $2 \leq i \leq m$,\\
$w(v_1^{(2)}) = (3m^2+5m-2)/2$,\\
$w(v_i^{(2)}) = 4m-i+2$, if $2 \leq i \leq m$,\\
$w(v_i^{(3)}) = 6m-2i+2$, if $1 \leq i \leq m-1$ and\\
$w(v_m^{(3)}) = (5m^2-3m+4)/2$.\\
Therefore, the graph $\Gamma(\mZ_6) \times \Gamma(\mZ_{2m})$ admits DAML for
all prime $m > 3$.\\
Hence, the graph $\Gamma(\mZ_6) \times \Gamma(\mZ_{2m})$ admits DAML for
all prime $m \neq 2$.
\end{proof}

\begin{theorem}
The graph $\Gamma(\mZ_9) \times \Gamma(\mZ_{m^2})$ admits DAML for all
prime $m > 3$.
\end{theorem}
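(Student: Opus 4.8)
The plan is to begin by identifying the two factor graphs explicitly. Since $9=3^2$, the nonzero zero-divisors of $\mZ_9$ are the multiples of $3$, namely $\{3,6\}$, and $3\cdot 6\equiv 0$, so $\Gamma(\mZ_9)\cong K_2$. For a prime $m$, the nonzero zero-divisors of $\mZ_{m^2}$ are $\{m,2m,\dots,(m-1)m\}$, and the product of any two is divisible by $m^2$; hence every pair is adjacent and $\Gamma(\mZ_{m^2})\cong K_{m-1}$. Consequently $G=\Gamma(\mZ_9)\times\Gamma(\mZ_{m^2})\cong K_2\,\square\,K_{m-1}$ is the $(m-1)$-prism on $|V(G)|=2m-2$ vertices: two copies of $K_{m-1}$, which I call layers $A$ and $B$, joined by a perfect matching pairing $a_j$ with $b_j$ for $1\le j\le m-1$. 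I would first record that the necessary condition holds for $m>3$: comparing open neighborhoods, the only possible coincidence, between $a_j$ and $b_{j'}$ with $j\neq j'$, forces $m-2=1$, so distinct neighborhoods fail exactly at $m=3$ (where $G\cong C_4$), which is why the statement excludes $m=3$.

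The key step is to express the vertex weights through the matched pairs. Writing $S_A=\sum_j f(a_j)$ and $S_B=\sum_j f(b_j)$, completeness of each layer gives $w(a_j)=S_A-f(a_j)+f(b_j)$ and $w(b_j)=S_B-f(b_j)+f(a_j)$. Setting $d_j=f(a_j)-f(b_j)$ collapses these to $w(a_j)=S_A-d_j$ and $w(b_j)=S_B+d_j$, with $S_A-S_B=\sum_k d_k$. Thus distinctness within a layer is equivalent to the $d_j$ being distinct, and distinctness across layers reduces to the condition $d_j+d_{j'}\neq\sum_k d_k$ for all $j,j'$.

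To satisfy both, I would use the pairing $f(a_j)=j$ and $f(b_j)=2m-1-j$ for $1\le j\le m-1$, so layer $A$ receives the labels $\{1,\dots,m-1\}$ and layer $B$ the labels $\{m,\dots,2m-2\}$. Then $d_j=2j-(2m-1)$ is strictly increasing in $j$, forcing distinct within-layer weights, and a short computation gives the closed forms $w(a_j)=(m^2+3m-2)/2-2j$ and $w(b_j)=(3m^2-9m+4)/2+2j$. I would then verify cross-layer distinctness in the cleanest way: show that the largest $A$-weight $w(a_1)$ is smaller than the smallest $B$-weight $w(b_1)$, which amounts to the single inequality $m^2-6m+7>0$. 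Since its roots are $3\pm\sqrt{2}$, this holds precisely for $m\ge 5$, so the two layers occupy disjoint weight intervals and all $2m-2$ weights are distinct.

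The main obstacle is not the construction itself but pinning down exactly where it breaks: the whole argument hinges on the separation inequality $m^2-6m+7>0$, whose failure at $m=3$ mirrors the neighborhood coincidence in $C_4$. I would therefore exhibit $m=5$ as an explicit base case, to match the style of the preceding Cartesian-product theorems, and then give the uniform argument for all primes $m>3$ (equivalently $m-1\ge 4$), noting that primality of $m$ is used only to guarantee the clean identification $\Gamma(\mZ_{m^2})\cong K_{m-1}$.
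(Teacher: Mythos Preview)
Your proposal is correct. Both you and the paper begin by identifying $\Gamma(\mZ_9)\cong K_2\ (=P_2)$ and $\Gamma(\mZ_{m^2})\cong K_{m-1}$, so that the graph in question is the prism $P_2\times K_{m-1}$. From there the approaches diverge: the paper's proof is a single citation, invoking the result of Sankar et al.\ that $P_2\times K_n$ admits a DAML for all $n\ge 3$, and concludes immediately. You instead construct an explicit labeling $f(a_j)=j$, $f(b_j)=2m-1-j$, reduce distinctness to the difference variables $d_j=f(a_j)-f(b_j)$, and verify cross-layer separation via the inequality $m^2-6m+7>0$. Your argument is self-contained and additionally explains \emph{why} $m=3$ must be excluded (the $C_4$ neighborhood coincidence), whereas the paper's proof is shorter but depends on an external reference; either establishes the theorem.
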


\begin{proof}
It is observed that the graph $\Gamma(\mZ_9) \times \Gamma(\mZ_{m^2})
\cong P_2 \times K_{m-1}$ for all prime $m \geq 3$.

Already Sankar et al. \cite{ss} proved that, for all $n \geq 3$, $P_2 \times K_n$ admits DAML.

Therefore, the graph $\Gamma(\mZ_9) \times \Gamma(\mZ_{m^2})$ admits DAML
for all prime $m > 3$.
\end{proof}

\begin{theorem}
The graph $\Gamma(\mZ_6) \times \Gamma(\mZ_{m^2})$ admits DAML for all
prime $m \geq 3$.
\end{theorem}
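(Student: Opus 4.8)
The plan is, first, to identify the two factors. As $m$ is prime, the zero-divisors of $\mZ_{m^2}$ are precisely the nonzero multiples of $m$, i.e. $\{m,2m,\dots,(m-1)m\}$, and the product of any two lies in $m^2\mZ$; hence every pair is adjacent and $\Gamma(\mZ_{m^2})\cong K_{m-1}$. Also $\Gamma(\mZ_6)$ has vertices $\{2,3,4\}$ with $3$ joined to both $2$ and $4$ while $2\cdot4=2\neq0$, so $\Gamma(\mZ_6)\cong P_3$. Thus $G=\Gamma(\mZ_6)\times\Gamma(\mZ_{m^2})\cong P_3\times K_{m-1}$, the natural companion of the graph $P_2\times K_{m-1}$ treated in the previous theorem. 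Writing $n=m-1$, the graph is three disjoint copies of $K_n$ (the layers indexed by the three vertices of $P_3$), joined by a perfect matching from the first layer to the central second and another from the second to the third; the central-layer vertices then have degree $n+1$ and the outer-layer vertices degree $n$. Since the matching edges separate the vertices inside each layer, and the three layers from one another, no two vertices have equal open neighborhoods, so the necessary condition is satisfied and DAML is not a priori excluded (unlike in the non-existence results above).

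Next I would label by blocks. Write the layers as $x_1,\dots,x_n$, $y_1,\dots,y_n$, $z_1,\dots,z_n$ with $x_i\sim y_i\sim z_i$, and put $S_X=\sum_j f(x_j)$, $S_Y=\sum_j f(y_j)$, $S_Z=\sum_j f(z_j)$, so that $w(x_i)=S_X-f(x_i)+f(y_i)$, $w(z_i)=S_Z-f(z_i)+f(y_i)$, and $w(y_i)=S_Y-f(y_i)+f(x_i)+f(z_i)$. These formulas immediately flag the trap to be avoided: a constant vertical difference $f(y_i)-f(x_i)$ would make an entire outer layer have equal weights, which kills the naive identity-within-each-block labeling. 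This suggests reversing the central block. Concretely I would take $f(x_i)=i$, $f(y_i)=2n+1-i$, $f(z_i)=2n+i$, a bijection onto $\{1,\dots,3n\}$, giving $w(x_i)=S_X+2n+1-2i$, $w(y_i)=S_Y+3i-1$, and $w(z_i)=S_Z+1-2i$. Each is strictly monotone in $i$, so weights are distinct within each layer.

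The real work is cross-layer injectivity. With $S_X=\tfrac{n(n+1)}2$, $S_Y=\tfrac{n(3n+1)}2$, $S_Z=\tfrac{n(5n+1)}2$, the three layers occupy the intervals $[S_X+1,\,S_X+2n-1]$, $[S_Y+2,\,S_Y+3n-1]$, and $[S_Z-2n+1,\,S_Z-1]$. The first two are disjoint, their separation being $n^2-2n+3>0$, and the two outer layers are a fortiori disjoint. The only delicate comparison is the central layer against the top one, whose separation is $n^2-5n+2$; this is positive exactly for $n\ge5$, hence for every prime $m\ge7$ the three intervals are pairwise disjoint and injectivity is immediate.

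The main obstacle is therefore the residual overlap at the small primes $m=3,5$ (that is $n=2,4$), where the $n^2$ label-sum gaps do not yet swamp the $O(n)$ spreads within the layers. I would finish these by hand. For $m=3$ the reverse-central labeling already produces the distinct weights $\{4,6,8,9,10,12\}$, so nothing further is needed. For $m=5$ the reverse-central labeling collides once (a single coincidence $w(y_i)=w(z_j)$), so I would instead use a bespoke within-layer permutation, e.g. $f(x_i)=i$, $f(y_i)=4+\sigma(i)$ with $\sigma=(2,4,1,3)$, and $f(z_i)=8+\tau(i)$ with $\tau=(4,3,2,1)$, which one verifies yields the twelve distinct weights $\{12,13,15,16,31,32,33,34,36,37,39,40\}$. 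Together with the interval argument for $m\ge7$, this shows $\Gamma(\mZ_6)\times\Gamma(\mZ_{m^2})$ admits DAML for every prime $m\ge3$.
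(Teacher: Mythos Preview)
Your proof is correct and follows essentially the same strategy as the paper: identify $G\cong P_3\times K_{m-1}$, label the three $K_{m-1}$-layers in blocks with a reversal in one block to break the constant vertical difference, and separate the layers by an interval argument on the weights, handling small $m$ by hand.

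The one substantive difference is \emph{which} block gets the largest labels. You give the \emph{center} layer the middle block $\{n+1,\dots,2n\}$ (reversed); the paper instead gives the center layer the \emph{top} block $\{2n+1,\dots,3n\}$ (with the matching reversed) and the two outer layers the blocks $\{1,\dots,n\}$ and $\{n+1,\dots,2n\}$. Because the center vertex has degree $n+1$ rather than $n$, its layer already has the largest weight-sum contribution, and giving it the largest labels as well pushes its weight interval strictly above the other two already for $n\ge 4$. Concretely, the paper's separation between the top two intervals is $(n-1)(n-3)$ rather than your $n^2-5n+2$, so its interval argument covers every prime $m\ge 5$ and only $m=3$ needs a separate figure---your bespoke $m=5$ repair is then unnecessary. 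Otherwise the two arguments are the same.
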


\begin{proof}
Consider the zero-divisor graphs $\Gamma(\mZ_6)$ and
$\Gamma(\mZ_{m^2})$ for all prime \linebreak $m \geq 3$.  Let $G =
\Gamma(\mZ_6) \times \Gamma(\mZ_{m^2})$ with $|V(G)| = 3m-3$.

\noi{\bf Case (i):} Let $m = 3$.  The graph $\Gamma(\mZ_6) \times
\Gamma(\mZ_9)$ admits DAML with the labels as in Figure \ref{fig14}.  It is
clear that $\Gamma(\mZ_6) \times \Gamma(\mZ_9)$ admits DAML.

\begin{figure}[h!]
  \centerline{\includegraphics{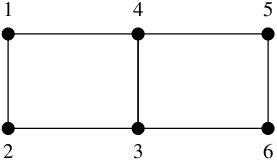}}
  \caption{The graph $\Gamma(\mZ_6) \times \Gamma(\mZ_9)$}
  \label{fig14}
\end{figure}

\noi{\bf Case (ii):} Let $m > 3$ and prime.  Consider the graph
$\Gamma(\mZ_6) \times \Gamma(\mZ_{m^2})$ with $|V(\Gamma(\mZ_6) \times
\Gamma(\mZ_{m^2}))| = 3m-3$.  We label the vertices of $\Gamma(\mZ_6)
\times \Gamma(\mZ_{m^2})$ as shown in Figure \ref{fig15}.

\begin{figure}[h!]
  \centerline{\includegraphics{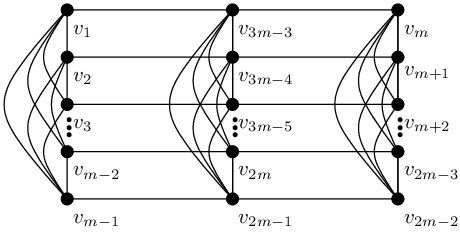}}
  \caption{The graph $\Gamma(\mZ_6) \times \Gamma(\mZ_{m^2})$}
  \label{fig15}
\end{figure}

Define a labeling function $f : V(G) \to \{1, 2, ..., 3m-3\}$ by\\
$f(v_i) = i$, if $1 \leq i \leq 3m-3$.

Then the weight of each vertex is given by:
\[w(v_i) = \begin{cases} \frac{1}{2} [m^2+5m-4]-2i, & \text{if } 1
  \leq i \leq m-1 \\ \frac{1}{2} [3m^2+3m-4]-2i, & \text{if } m \leq i
  \leq 2m-2 \\ \frac{1}{2} [5m^2-9m+4]-i+[3m-(2j-1)], & \text{if }
  2m-1 \leq i \leq 3m-3 \text{ and } j = 3+i-2m. \end{cases}\] 
It follows that $w(v_i) \neq w(v_j)$, for all $i \neq j$.\\Therefore,
the graph $\Gamma(\mZ_6) \times \Gamma(\mZ_{m^2})$ admits DAML for all $m >
3$.

Hence, the graph $\Gamma(\mZ_6) \times \Gamma(\mZ_{m^2})$ admits DAML for
all prime $m \geq 3$.
\end{proof}

\begin{theorem}
The graph $\Gamma(\mZ_9) \times \Gamma(\mZ_{3m})$ admits DAML for all odd
$m$ and $m \not\equiv 0 \ (\text{mod } 3)$.
\end{theorem}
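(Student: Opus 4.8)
The plan is to reduce $G=\Gamma(\mZ_9)\times\Gamma(\mZ_{3m})$ to a prism and then produce an explicit labelling, exactly as in the preceding Cartesian-product theorems. First I would record the structural facts. The zero-divisors of $\mZ_9$ are $3$ and $6$ with $3\cdot 6\equiv 0$, so $\Gamma(\mZ_9)\cong K_2$ is a single edge; hence $G$ is precisely the prism $K_2\times H$ over $H:=\Gamma(\mZ_{3m})$, i.e.\ two disjoint copies $H^{(1)},H^{(2)}$ of $H$ joined by a perfect matching $v^{(1)}v^{(2)}$. Since $\gcd(3,m)=1$, the Chinese Remainder Theorem gives $\mZ_{3m}\cong\mZ_3\times\mZ_m$, and the two multiples of $m$, namely $m$ and $2m$, are each adjacent to all $m-1$ multiples of $3$. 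For $m$ prime these are the only zero-divisors, so $H\cong K_{m-1,2}$, which I would treat as the principal case; for composite (odd, $3\nmid m$) values the extra classes $(a,b)$ with $a\in\{1,2\}$ and $b$ a zero-divisor of $\mZ_m$ appear and attach to the matching multiples of $3$, and I would fold these into the same block labelling.

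Next I would set up the labelling on the $|V(G)|=2m+2$ vertices of the prime case. Write the $2(m-1)$ matched ``light'' vertices $a_i^{(1)},a_i^{(2)}$ (copies of the multiples of $3$, each of degree $3$ in $G$) and the four ``hub'' vertices $b_1^{(1)},b_2^{(1)},b_1^{(2)},b_2^{(2)}$ (copies of $m,2m$, each of degree $m$), and define $f$ to be the identity $f(v_i)=i$ after fixing a convenient order. Then each light weight is $w(a_i^{(j)})=C_j+f(a_i^{(\bar{\jmath})})$, where $C_j=f(b_1^{(j)})+f(b_2^{(j)})$ is constant on copy $j$, and each hub weight is $w(b_k^{(j)})=S_j+f(b_k^{(\bar{\jmath})})$, where $S_j=\sum_i f(a_i^{(j)})$ is the full light-label sum of copy $j$. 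I would choose the order so that the light weights split into arithmetic blocks of the form $w=\alpha m+\beta-2i$, exactly the type seen in the $\Gamma(\mZ_6)\times\Gamma(\mZ_{2m})$ proof, while the four hub weights come out as the quadratic-in-$m$ values $S_j+f(b_k^{(\bar{\jmath})})$.

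Finally I would verify that $w$ is injective. Each hub weight is a sum of $m-1$ labels, hence of size at least $\binom{m}{2}=\Theta(m^2)$, whereas every light weight is a sum of three labels, of size $O(m)$; so for all but finitely many small $m$ no hub weight can meet a light weight, and the few small primes ($m=5,7,11,13$) are separated by hand. The substantive work, and the main obstacle, is the internal bookkeeping. Within a copy the light weights $C_j+f(a_i^{(\bar{\jmath})})$ are automatically distinct, so the real danger is a cross-copy coincidence $C_1+f(a_i^{(2)})=C_2+f(a_{i'}^{(1)})$; I would kill these by interleaving the two copies' light labels with a fixed offset $C_2-C_1$ of a controlled parity, which is exactly where the hypothesis that $m$ is odd enters. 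Likewise the two hubs inside one copy differ automatically (their partners carry distinct labels), so only the cross-copy hub pair needs care, and this is arranged by forcing $S_1$ and $S_2$ to differ by more than the spread of the partner labels. I would close with the base case $m=5$ — the $12$-vertex prism over $\Gamma(\mZ_{15})\cong K_{4,2}$ — labelled directly as in Case~(i) of the earlier theorems, and then the uniform formula for $m>5$.
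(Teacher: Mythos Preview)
Your plan is essentially the paper's own proof: it orders the $2m+2$ vertices as the two hubs followed by the $m-1$ light vertices in each copy, takes the identity labelling $f(v_i)=i$, and obtains four quadratic hub weights $(m^2+5m)/2$, $(m^2+5m+2)/2$, $(3m^2+3m-4)/2$, $(3m^2+3m-2)/2$ together with light weights in the single progression $w(v_i)=m+i+4$ (occupying the disjoint ranges $[m+7,2m+5]$ and $[2m+8,3m+6]$). Two simplifications relative to your outline: no separate base case $m=5$ is needed since the formula is uniform, and the oddness of $m$ is never actually used to separate the light blocks---they are disjoint for every $m$---so your parity mechanism is superfluous; the hypothesis ``odd and $3\nmid m$'' is effectively a stand-in for ``$m$ prime'', which both you and the paper tacitly assume via the vertex count $|V(G)|=2m+2$.
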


\begin{proof}

Let $G = \Gamma(\mZ_9) \times \Gamma(\mZ_{3m})$ with $|V(G)| =
2m+2$ for all odd $m$ and $m \not\equiv 0 \ (\text{mod } 3)$.

We label the vertices of $G$ as given in Figure \ref{fig16}.

\begin{figure}[h!]
  \centerline{\includegraphics{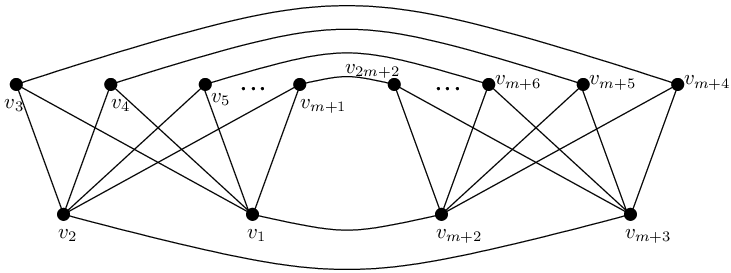}}
  \caption{The graph $\Gamma(\mZ_9) \times \Gamma(\mZ_{3m})$}
  \label{fig16}
\end{figure}

Let $f : V(G) \to \{1, 2, ..., 2m+2\}$ be a labeling function defined by\\
$f(v_i) = i$, for $1 \leq i \leq 2m+2$.

Then the weight of each vertex is given by:
\[w(v_i) = \begin{cases} \frac{1}{2} (m^2+5m), & \text{for }
  i=1 \\ \frac{1}{2} (m^2+5m+2), & \text{for } i=2 \\ m+i+4, & \text{for
  } 3 \leq i \leq m+1 \\ \frac{1}{2} (3m^2+3m-4), & \text{for } i=m+2
  \\ \frac{1}{2} (3m^2+3m-2), & \text{for } i=m+3 \\ m+i+4, & \text{for
  } m+4 \leq i \leq 2m+2 \end{cases}\]

It follows that $w(v_i) \neq w(v_j)$, for all $i \neq j$.\\  Hence, the
graph $\Gamma(\mZ_9) \times \Gamma(\mZ_{3m})$ admits DAML for all odd $m$
and\\ $m \not\equiv 0 \ (\text{mod } 3)$.
\end{proof}

\section{Conclusion}

In this paper, we found the distance antimagic labeling of $[m\Gamma(\mZ_9) + \Gamma(\mZ_4)] \times \Gamma(\mZ_9)$ for all $m \geq 1$, $\Gamma(\mZ_6) \times \Gamma(\mZ_{2m})$ for all primes $m \geq 3$ and  $\Gamma(\mZ_6) \times \Gamma(\mZ_{m^2})$ for all primes $m \geq 3$.  The distance antimagic labeling of  $\Gamma(\mZ_m) \times \Gamma(\mZ_{n})$ for all values of $m$ and $n$ and  $\Gamma(\mZ_p) \times [\Gamma(\mZ_m) \times \Gamma(\mZ_{n})]$ for all values of $p, m$ and $n$ are left for future research.


\begin{thebibliography}{99}
  \bibitem{al99} D.F. Anderson, P.S. Livingston, The zero divisor
    graph of a commutative ring, Journal of Algebra
    \textbf{ 217} (1999) 434--447.

  \bibitem{ak12} S. Arumugam, N. Kamatchi, On $(a,d)$ distance
    antimagic graphs, The Australasian Journal of Combinatorics \textbf{54} (2012) 279--287.

  \bibitem{be88} I. Beck, Coloring of commutative rings, Journal of Algebra, 
    \textbf{116} (1988) 208--226.

  \bibitem{cl05} G. Chartrand, L. Lesniak, Graphs and Digraphs,
    Chapman and Hall, CRC, 4$^{th}$ edition (2005).

  \bibitem{ga17} J.A. Gallian, A dynamic survey of graph labeling,
    The Electronic Journal of Combinatorics (2017) $\#$DS6.

  \bibitem{hr90} N. Hartsfield and G. Ringel, Pearls in Graph Theory:
    A Comprehensive introduction, Academic Press, Boston (1990),
    108--110.

  \bibitem{mm08} M. Baca, M. Miller, Super edge-antimagic
    graphs - A wealth of problems and some solutions, Brown Walker
    Press (2008).

  \bibitem{ss} K. Sankar, V. Sivakumaran, Distance antimagic
    labeling of product of graphs, Communicated to Iranian Journal of
    Science and Technology Transactions A: Science.

  \bibitem{we01} D. West, Introduction to graph theory, Second
    ed. Prentice Hall, 2001.
\end{thebibliography}
\end{document}